\newcommand\norm[1]{\left\lVert#1\right\rVert}
\theoremstyle{plain}
\newtheorem{theorem}{Theorem}
\theoremstyle{definition}
\theoremstyle{remark}
\newtheorem{remark}{Remark}
\newtheoremstyle{assumption-style}%
{0pt}
{0pt}
{}
{}
{\bfseries}
{.}
{.5em}
{}
\theoremstyle{assumption-style}
\newtheorem{assumption}{\textbf{A}\ignorespaces}
\newenvironment{Qtheorem}[1][]
  {\quote\begin{assumption}[#1]}
  {\end{assumption}\ \endquote}
\numberwithin{equation}{section}
\newcommand{\R}{\mathbb R}
\def\assumptionautorefname{}
\def\assumptionautorefname{}
\renewcommand{\assumptionautorefname}{A\@gobble}
\setlist{nolistsep}
\begin{document}

\title{On Convergence of a Truncation Scheme for Approximating Stationary Distributions of
Continuous State Space Markov Chains and Processes}
\author{Alex Infanger\thanks{Institute for Computational \& Mathematical Engineering, Stanford University, USA.} \and Peter W. Glynn\footnotemark[1]\ \thanks{Department of Management Science \& Engineering, Stanford University, USA.}}
\maketitle

\begin{abstract} \noindent 
  In the analysis of Markov chains and processes, it is sometimes convenient to replace an unbounded state space with a ``truncated'' bounded state space. When such a replacement is made, one often wants to know whether the equilibrium behavior of the truncated chain or process is close to that of the untruncated system. For example, such questions arise naturally when considering numerical methods for computing stationary distributions on unbounded state space. In this paper, we use the principle of ``regeneration'' to show that the stationary distributions of ``fixed state'' truncations converge in great generality (in total variation norm) to the stationary distribution of the untruncated limit, when the untruncated chain is positive Harris recurrent. Even in countable state space, our theory extends known results by showing that the augmentation can correspond to an $r$-regular measure. In addition, we extend our theory to cover an important subclass of Harris recurrent Markov processes that include non-explosive Markov jump processes on countable state space.
\end{abstract}
Keywords: Markov chains, stationary distribution, numerical methods, truncation, regeneration.

\section{Introduction} 

When $X = (X_n: n \geq 0)$ is a positive recurrent Markov chain taking values in an infinite state space $S$, the question of how to approximate its stationary distribution $\pi$ becomes relevant. For a discrete state space Markov chain with one-step transition matrix $P = (P(x,y) : x,y \in S)$, a natural approximation scheme involves truncating the state space $S$ to a finite subset $A_n$, followed by construction of an approximating transition matrix $P_n = (P_n(x,y): x,y \in A_n)$. When $P_n$ is a stochastic matrix for which $P_n(x,y) \ge P(x,y)$ for $x,y\in A_n$, we say that  $P_n$ is an augmentation of the truncation $A_n$. However, if $\pi_n$ is a stationary distribution associated with $P_n$, it is well known that $\pi_n$ can fail to converge to $\pi$, even when $A_n$ increases to $S$ when $n$ tends to infinity; see, for example, (2.5) in \citet{wolfApproximationInvariantProbability1980}.

However, when one constructs the augmentation via a “fixed state” augmentation, \citet{wolfApproximationInvariantProbability1980} showed that convergence of $\pi_n$ to $\pi$ is guaranteed when $X$ is an irreducible positive recurrent countable state space Markov chain. This result also appears in \citet{gibsonAugmentedTruncationsInfinite1987}. In this paper, we show that such convergence can be generally validated when the underlying Markov chain or process is suitably regenerative; see \citet{thorissonCouplingStationarityRegeneration2000} for an extensive discussion of regeneration. The result essentially follows via a monotone convergence argument based on the representation of the stationary distribution (also known as the equilibrium distribution, steady-state distribution, or invariant measure) in terms of a ratio of regenerative ``cycle'' quantities. In order to implement this idea, the proof requires coupling the dynamics of the truncated chain to the dynamics of the untruncated chain $X$, so that the truncated and untruncated chains have identical trajectories over a regenerative cycle, up to the moment when the truncated chain attempts to exit the truncation set. This coupling argument lies at the heart of the proofs that form the core contribution of this paper.

Even in countable state space, our coupling-based proof approach differs from the existing theory, and clarifies the circumstances under which such ``fixed state augmentation'' (involving returning the truncated chain to a fixed state $z$ every time the chain attempts to exit the truncation set) can be generalized to a ``fixed return distribution augmentation'' (involving redistributing the truncated chain according to a fixed distribution $\nu$ every time the chain attempts to exit the truncation set). In particular, the theory requires that $\nu$ be $r$-regular; see Theorems \ref{thm::generalized-fixed-distribution-convergence} and \ref{thm::m-step-minorization-convergence} for details. However, the main advantage of this proof approach is that it allows us to easily develop convergence results for general state space Markov chains and also general state space Markov jump processes. With these theorems available, computational algorithms for stationary distributions can focus on effective computational methods for Markov chains and processes taking values in a compact state space, provided that the compact state space truncation is of the form described in this paper.

\autoref{sec::strongly-minorized-Harris-recurrent-fixed-state-convergence} describes the coupling underlying our proof technique, in the special setting of Harris recurrent Markov chains satisfying a “strong minorization” condition; see \autoref{assumption::Lyapunov-with-strong-minorization}. In that setting, we show that the approximating stationary distribution $\pi_n$ converges to $\pi$ in a very strong sense, namely in the weighted total variation distance, provided that a suitable Lyapunov condition is in force. The Lyapunov condition is essentially necessary and sufficient, given the total variation distance we are using. We also study conditions under which one can extend the augmentation to fixed return distribution augmentations; see \autoref{thm::generalized-fixed-distribution-convergence}. In \autoref{sec::relaxing-strong-minorization}, we describe a more complicated coupling suitable for general Harris recurrent Markov chains (that do not necessarily exhibit strong minorizations). \autoref{thm::m-step-minorization-convergence} provides a general convergence result in the weighted total variation norm for fixed return distribution augmentations for such chains. Finally, \autoref{sec::fixed-state-theory-for-Markov-jump-processes} describes the corresponding theory for a certain class of Harris recurrent Markov jump processes that possess regenerative structure permitting direct application of our discrete time arguments to proving convergence. According to our understanding (see, for example, Section 4.3.4 of of \citet{kuntzStationaryDistributionsContinuoustime2021}), this continuous time theory provides the first rigorous proof of convergence, even for fixed state truncation/augmentation, when dealing with non-explosive Markov jump processes having unbounded jump rates.

In \citet{infanger2022convergence}, arguments utilizing tightness and weak continuity of the transition probabilities are used to study related, completely general (non fixed state) truncation-augmentation algorithms for countable state and continuous state Markov chains. In contrast, the current paper exploits regeneration rather than tightness, so that the conditions imposed here are quite different. In addition, while the results in the current paper allow the truncation to be very general, the theory based on tightness imposes conditions on the $A_n$'s, namely that the $A_n$'s be sublevel sets of a suitably chosen Lyapunov function.

\section{Convergence Theory for Strongly Minorized Harris Recurrent Markov Chains}\label{sec::strongly-minorized-Harris-recurrent-fixed-state-convergence} 
Let $X=(X_n:n\geq 0)$ be an $S$-valued Markov chain, with one-step transition kernel $P=(P(x,dy):x,y\in S)$. Here $P(x,dy)=P(X_{n+1}\in dy|X_n=x)$ for $x,y\in S$. For a generic non-negative function $w$, let $Pw$ be the real-valued function for which $(Pw)(x)=\int_{S}^{}w(y)P(x,dy)$. Also, for a (measurable) subset $B\subseteq S$, let $I_B:S\rightarrow\{0,1\}$ be the function for which $I_B(x)$ is 1 or 0 depending whether or not $x\in B$. Finally, for $x\in S$, let $\delta_x(\cdot)$ be the probability associated with a unit point mass at $x\in S$.

Our main assumption in this section is: 
\begin{Qtheorem}\label{assumption::Lyapunov-with-strong-minorization}
There exists a non-empty subset $C\subseteq S$, $g:S\rightarrow \R_+$,$r:S\rightarrow[1,\infty)$, $\lambda>0$, $b\in \R_+$, and a probability $\phi$ on $C$ such that:
\begin{enumerate}[label=\roman*)]
\item $\sup_{x\in C}g(x)<\infty$; \label{assumption::Lyapunov-with-strong-minorization-bounded-on-C}
\item $(Pg)(x)\leq g(x)-r(x)+bI_C(x),x\in S$;
\item $P(x,dy)\geq \lambda\phi(dy),x,y\in C$.\label{assumption::Lyapunov-with-strong-minorization-strong-minorization}
\end{enumerate}
\end{Qtheorem}

We refer to condition \autoref{assumption::Lyapunov-with-strong-minorization} \ref{assumption::Lyapunov-with-strong-minorization-strong-minorization} as the \emph{strong minorization} condition, because the inequality involves the one-step transition kernel (rather than the $m$-step transition kernel). In \autoref{sec::relaxing-strong-minorization}, we weaken this hypothesis. Condition \autoref{assumption::Lyapunov-with-strong-minorization} \ref{assumption::Lyapunov-with-strong-minorization-strong-minorization} ensures that $C$ is a \emph{small set}; see p.102 of \citet{meynMarkovChainsStochastic2009}. We refer to $g$ as a stochastic \emph{Lyapunov function} for $X$.

\begin{remark} When $S$ is countably infinite, $X$ is irreducible, and there exists $z$ for which $P(z,z)>0$, then \autoref{assumption::Lyapunov-with-strong-minorization} holds for any Markov chain with stationary distribution $\pi=(\pi(x):x\in S)$ for which 
\begin{align*}
\sum_{x\in S}^{}\pi(x)r(x)<\infty.
\end{align*}
We may take $C=\{z\}$, $\lambda=P(z,z)$, $\phi(\cdot)=\delta_z(\cdot)$, and 
\begin{align*}
g(x)=E_x\sum_{j=0}^{T(C)-1} r(X_j),
\end{align*}
where $T(B)=\inf\{k\geq 0: X_k\in B\}$ is the first hitting time of $B$ for a generic subset $B\subseteq S$; see Chapter 14 of \citet{meynMarkovChainsStochastic2009} (see, in particular, the last point in the proof of part (ii) of Theorem 14.2.3).
\end{remark}
\begin{remark} We can write
\begin{align*}
P(x,dy) = p(x,y)\phi(dy)+ P_S(x,dy)
\end{align*}
for $x\in C,y\in S$, where $P_S(x,\cdot)$ is singular with respect to $\phi$. The condition \autoref{assumption::Lyapunov-with-strong-minorization} asserts that the transition density $p(\cdot)$ can be chosen to satisfy
\begin{align*}
\inf_{x,y\in C} p(x,y)\geq \lambda.
\end{align*}
Thus, \autoref{assumption::Lyapunov-with-strong-minorization} \ref{assumption::Lyapunov-with-strong-minorization-strong-minorization} holds when $X$ has a transition density that is bounded below over $C$. 
\end{remark}

For $x\in S$, let $P_x(\cdot)=P(\cdot|X_0=x)$ and $E_x(\cdot)$ be its associated expectation. Put $S(B)=\inf\{k\geq 1: X_k\in B\}$ for a generic subset $B$. \autoref{assumption::Lyapunov-with-strong-minorization} implies that
\begin{subequations}\label{eq::sum-of-rewards-until-TC-bound}
\begin{align}
E_x \sum_{j=0}^{T(C)-1}r(X_j)\leq g(x)\label{eq::sum-of-rewards-until-TC-bound-a}
\end{align}
for $x\in C^c$ and
\begin{align}
\sup_{x\in C}E_x\sum_{j=0}^{S(C)-1}r(X_j)\leq b+\sup_{x\in C}g(x);\label{eq::sum-of-rewards-until-TC-bound-b}
\end{align}
see p.344 of \citet{meynMarkovChainsStochastic2009}. Since $r(x)\geq 1$, \eqref{eq::sum-of-rewards-until-TC-bound}  implies that $X$ hits the small set $C$ infinitely often $P_x$ a.s., and hence is a Harris recurrent Markov chain. In fact, \eqref{eq::sum-of-rewards-until-TC-bound-b} implies that $E_xS(C)$ is bounded in $x$ over $C$ and hence $X$ is a positive recurrent Harris chain; see p.266 of \citet{meynMarkovChainsStochastic2009}. It follows that $X$ has a unique stationary distribution $\pi$ for which
\begin{align*}
\pi(dy) = \int_{S}^{}\pi(dx) P(x,dy)
\end{align*}
for $y\in S$. 
\end{subequations}

\citet{athreyaNewApproachLimit1978} and \citet{nummelinSplittingTechniqueHarris1978} observed that \autoref{assumption::Lyapunov-with-strong-minorization} \ref{assumption::Lyapunov-with-strong-minorization-strong-minorization} can be used to write the transition probabilities on $C$ as a mixture of two distributions, namely
\begin{align*}
P(x,\cdot) = \lambda\phi(\cdot) + (1-\lambda)H(x,\cdot)
\end{align*}
for $x\in C$, where $H(x,\cdot)=(P(x,\cdot)-\lambda\phi(\cdot))/(1-\lambda)$. (When $\lambda=1$, $H(x,\cdot)$ may be chosen arbitrarily.) Informally, when $X$ hits $C$ (say at time $n$), then $X$ distributes itself according to $\phi$ at time $n+1$ with probability $\lambda$ and distributes itself according to $H(X_n, \cdot)$ with probability $1-\lambda$. At times at which $X$ distributes itself according to $\phi$, $X$ regenerates. We note that since $X$ visits $C$ infinitely often and regenerates with probability $\lambda$ at each such visit, we may conclude that there are infinitely many such regeneration times at which $X$ has distribution $\phi$; see p.324 of \citet{doobStochasticProcesses1953}.

A more precise description requires enriching our probability space so that it supports a sequence $(\beta_n:n\geq 0)$ of random variables, in which $\beta_n=1$ when $X$ regenerates at time $n$ and is 0 otherwise; see \citet{athreyaNewApproachLimit1978} and \citet{nummelinGeneralIrreducibleMarkov1984} for details. If $\tau=\inf\{k\geq 1: \beta_k=1\}$, then $\tau$ is the first regeneration time, and $\tau_k=\inf\{j>\tau_{k-1}: \beta_j=1\}$ is the time of the $k$'th regeneration.

We let $P_\mu(\cdot)=\int_{S}^{}\mu(dx)P_x(\cdot)$ be the probability under which $X$ is initialized with distribution $\mu$, and let $E_\mu(\cdot)$ be its associated expectation. If $S_1(B)=S(B)$ and $S_k(B)=\inf\{j>S_{k-1}(B):X_j\in B\}$ for $k\geq 2$, note that if
\begin{align*}
s &= \sup_{x\in C} E_x \sum_{j=0}^{S(C)-1}r(X_j) <\infty,\\
\gamma &=\sup_{x\in C} E_x\sum_{j=0}^{\tau-1}r(X_j),
\end{align*}
then, for $x\in C$,
\begin{align*}
E_x\sum_{j=0}^{\tau-1} r(X_j)&\leq E_x \sum_{j=0}^{S_2(C)-1}r(X_j) + E_x I(\tau>S(C)+1)\sum_{j=S_2(C)}^{\tau-1} r(X_j)\\
&\leq 2s + E_x I(\tau>S(C)+1)E_x[\sum_{j=S_2(C)}^{\tau-1} r(X_j)|X_i, \beta_i, i\leq S_2(C)]\\
&\leq 2s+E_x I(\tau>S(C)+1)\gamma\\
&=2s + \gamma E_x(1-P_x(\tau=S(C)+1|X_{S(C)}))\\
&= 2s+\gamma(1-\lambda),
\end{align*}
and hence
\begin{align*}
\gamma\leq 2s+(1-\lambda)\gamma.
\end{align*}
Consequently, 
\begin{align}
E_\phi \sum_{j=0}^{\tau-1} r(X_j)\leq \gamma\leq 2s/\lambda,\label{eq::finite-mean-r-cycle-length}
\end{align}
so $X$ is a regenerative process with finite mean cycle length. As a result, the law of large numbers for regenerative processes (see, for example, \citet{smithRegenerativeStochasticProcesses1955}) implies that for each non-negative $f$,

\begin{align}
\frac{1}{n}\sum_{j=0}^{n-1}E_\phi f(X_j) \rightarrow \frac{E_\phi \sum_{j=0}^{\tau-1} f(X_j)}{E_\phi \tau} = \int_{S}^{}m(dx) f(x) \overset{\Delta}{=} mf\label{eq::regenerative-process-measure}
\end{align}
where $m(\cdot)$ is the probability given by 
\begin{align*}
m(\cdot) = \frac{E_\phi \sum_{j=0}^{\tau-1}I(X_j\in\cdot)}{E_\phi \tau}.
\end{align*}
Applying \eqref{eq::regenerative-process-measure} to $Pf$ we find that
\begin{align}
\frac{1}{n}\sum_{j=0}^{n-1}E_\phi (Pf)(X_j)\rightarrow mPf\label{eq::mPf-convergence}
\end{align}
for $f$ non-negative. But
\begin{align*}
E_\phi (Pf)(X_j) = E_\phi f(X_{j+1}),
\end{align*}
so 
\begin{align}
\frac{1}{n}\sum_{j=0}^{n-1}E_\phi (Pf)(X_j)= \frac{1}{n}\sum_{j=1}^{n}E_\phi f(X_j).\label{eq::Pfandf-sum}
\end{align}
If $f$ is also bounded, it follows from \eqref{eq::regenerative-process-measure}, \eqref{eq::mPf-convergence}, and \eqref{eq::Pfandf-sum} that $mf=mPf$ for all bounded and non-negative $f$, thereby implying that $m=mP$. Hence, $m$ is the stationary distribution of $P$, i.e.
\begin{align}
\pi(\cdot) = \frac{E_\phi \sum_{j=0}^{\tau-1}I(X_j\in\cdot)}{E_\phi \tau}.\label{eq::phi-regenerative-representation}
\end{align}

We now assume a sequence $(A_n:n\geq 1)$ of truncation sets for which $C\subseteq A_1 \subseteq A_2\subseteq ...$ with $\bigcap_{n=1}^\infty A_n^c=\emptyset$ with $P_\phi(T_n<S(C))>0$ for $n\geq 1$. Note that if $T_n=\inf\{k\geq 0: X_k\in A_n^c\}$ is the exit time from $A_n$, then $P_x(T_n\rightarrow\infty \text{ as }n\rightarrow\infty)=1$ for each $x\in S$. We first consider the augmentation in which $X$ is forced to re-enter $A_n$ using re-entry distribution $\phi$ whenever it attempts to exit $A_n$. In other words, the transition kernel $P_n=(P_n(x,dy):x,y\in A_n)$ governing $X$ under the $n$'th truncation-augmentation is given by
\begin{align}
P_n(x,dy) = P(x,dy) + P(x,A_n^c)\phi(dy)\label{eq::regenerative-augmentation}
\end{align}
for $x,y\in A_n$ for $n\geq 1$. Of course, we can re-write \eqref{eq::regenerative-augmentation} as a mixture, namely 
\begin{align}
P_n(x,dy) = P(x,A_n)R_n(x,dy) +P(x,A_n^c)\phi(dy)\label{eq::mixture-version-of-regenerative-augmentation}
\end{align}
for $x,y\in A_n$. As in our earlier discussion in this section, we can use the $\beta_n$'s to randomize the choice between $R_n$ and $\phi$. In particular, when $X_k\in A_n-C$, we let $\beta_{k+1}=2$ with probability $P(X_k, A_n^c)$, in which case $X_{k+1}$ has distribution $\phi$. Otherwise, $\beta_{k+1}=0$ and $X_{k+1}$ has distribution $R_n(X_k,\cdot)$. On the other hand, if $X_k\in C$, $\beta_{k+1}$ again equals 2 with probability $P(X_k, A_n^c)$, in which case $X_{k+1}$ has distribution $\phi$. With probability $\lambda$, $\beta_{k+1}=1$ and $X_{k+1}$ has distribution $\phi$, while $X_{k+1}$ has distribution $(P(X_k, \cdot \cap A_n)-\lambda\phi(\cdot))/(1-\lambda-P(X_k, A_n^c))$ and $\beta_{k+1}=0$ otherwise (with probability $1-\lambda-P(X_k, A_n^c)$). Let $\Gamma=\inf\{k\geq 1: \beta_k=2\}$ be the first time at which $X$ attempts to exit $A_n$ under the transition kernel $P_n$, and let $\tilde P_{x,n}(\cdot)$ be the associated probability on the path-space of the $(X_i, \beta_i)$'s with $\tilde E_{x,n}(\cdot)$ its corresponding expectation. Futher, note that if $\tau=\inf\{k\geq 1:\beta_k\geq 1\}$, $X_\tau$ has distribution $\phi$ and $\tau$ is a regeneration time for $X$ under both $\tilde P_{\phi, n}$ and $P_{\phi}$.

If $x_j\in A_n$ and $i_j\in\{0,1\}$ for $0\leq j\leq k$, then 
\begin{align}
&\ \ \ \ \ \ P_{x_0}(X_1\in dx_1, \beta_1=i_1, X_2\in dx_2, \beta_2=i_2, ..., X_k\in dx_k, \beta_k=i_k, T_n>k)\nonumber\\ 
&=\ \  P_{x_0}(X_1\in dx_1, \beta_1=i_1, X_2\in dx_2,\beta_2=i_2,...,X_k\in dx_k, \beta_k=i_k)\nonumber\\
&=\ \  \tilde P_{x_0, n}(X_1\in dx_1, \beta_1=i_1, X_2\in dx_2, \beta_2=i_2,...,X_k\in dx_k, \beta_k=i_k)\nonumber\\
&=\ \  \tilde P_{x_0, n}(X_1\in dx_1, \beta_1=i_1, X_2\in dx_2; \beta_2=i_2,..., X_k\in dx_k, \beta_k=i_k, \Gamma>k)\label{eq::equality-in-distribution-until-exit}.
\end{align}

For distributions $\mu_1$ and $\mu_2$ defined on $S$, let
\begin{align*}
\norm{\mu_1-\mu_2}_{r} = \sup\left\{\int_{S}^{}(\mu_1-\mu_2)(dx)f(x):|f(x)|\leq r(x), x\in S\right\}
\end{align*}
be the $r$-weighted total variation norm distance between $\mu_1$ and $\mu_2$.

We are now ready to state our convergence theorem for the truncation-\\augmentation scheme defined by \eqref{eq::mixture-version-of-regenerative-augmentation}. 

\begin{theorem}\label{thm-1} Assume \autoref{assumption::Lyapunov-with-strong-minorization} and suppose that $P_n$ is the truncated-augmented transition kernel defined by \eqref{eq::mixture-version-of-regenerative-augmentation}. Then for each $n\geq 1$, $X$ is a positive recurrent Harris chain with a unique stationary distribution $\pi_n$, and 
\begin{align*}
\norm{\pi_n-\pi}_{r}\rightarrow 0
\end{align*}
as $n\rightarrow\infty$.
\end{theorem}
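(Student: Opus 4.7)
The plan is to represent both $\pi$ and $\pi_n$ as ratios of regenerative cycle quantities starting from $\phi$, and then to use the coupling identity \eqref{eq::equality-in-distribution-until-exit} to match these quantities up to a correction that vanishes by dominated convergence.

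First I would establish the analogue of \eqref{eq::phi-regenerative-representation} for $\pi_n$. Since $C\subseteq A_n$ and $\phi$ is supported on $C$, the augmented kernel satisfies $P_n(x,\cdot)\ge P(x,\cdot)\ge\lambda\phi(\cdot)$ for $x\in C$, so the Athreya--Nummelin splitting applies verbatim to $P_n$ and $\tau=\inf\{k\ge 1:\beta_k\ge 1\}$ is a regeneration time under $\tilde P_{\phi,n}$ with $X_\tau\sim\phi$. To run the argument \eqref{eq::regenerative-process-measure}--\eqref{eq::phi-regenerative-representation} inside the truncation one needs $\tilde E_{\phi,n}\tau<\infty$; writing $\tau=\tau I(\Gamma>\tau)+\Gamma I(\Gamma=\tau)$ (valid since $\tau\le\Gamma$) and applying \eqref{eq::equality-in-distribution-until-exit} at the stopping time $\tau\wedge\Gamma$ yields $\tilde E_{\phi,n}\tau\le E_\phi\tau+E_\phi[T_n I(T_n\le\tau)]\le 2E_\phi\tau$, finite by \eqref{eq::finite-mean-r-cycle-length}. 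Hence
\begin{align*}
\pi_n(\cdot)=\frac{\tilde E_{\phi,n}\sum_{j=0}^{\tau-1}I(X_j\in\cdot)}{\tilde E_{\phi,n}\tau}.
\end{align*}

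Set $A_n=\tilde E_{\phi,n}\sum_{j=0}^{\tau-1}f(X_j)$ and $B_n=\tilde E_{\phi,n}\tau$, with $A,B$ the corresponding $P_\phi$ quantities. On $\{\Gamma>\tau\}$ the sequence $\beta_1,\ldots,\beta_\tau$ lies in $\{0,1\}$, so partitioning on $\{\tau=k\}$ and applying \eqref{eq::equality-in-distribution-until-exit} gives
\begin{align*}
\tilde E_{\phi,n}\Bigl[\sum_{j=0}^{\tau-1}f(X_j)I(\Gamma>\tau)\Bigr]=E_\phi\Bigl[\sum_{j=0}^{\tau-1}f(X_j)I(T_n>\tau)\Bigr].
\end{align*}
On the complementary event $\{\Gamma=\tau\}$, a second application of \eqref{eq::equality-in-distribution-until-exit}, using that $\beta_k=2$ under $\tilde P_{\phi,n}$ occurs with the same conditional probability $P(X_{k-1},A_n^c)$ as $\{T_n=k\}$ under $P_\phi$, identifies
\begin{align*}
\tilde E_{\phi,n}\Bigl[\sum_{j=0}^{\tau-1}f(X_j)I(\Gamma=\tau)\Bigr]=E_\phi\Bigl[\sum_{j=0}^{T_n-1}f(X_j)I(T_n\le\tau)\Bigr].
\end{align*}
Combining these with $A=E_\phi[\sum_{j=0}^{\tau-1}f(X_j)I(T_n>\tau)]+E_\phi[\sum_{j=0}^{\tau-1}f(X_j)I(T_n\le\tau)]$, the first terms cancel and, for $|f|\le r$,
\begin{align*}
|A_n-A|\le E_\phi\Bigl[\sum_{j=T_n}^{\tau-1}r(X_j)I(T_n\le\tau)\Bigr]\le E_\phi\Bigl[\sum_{j=0}^{\tau-1}r(X_j)I(T_n\le\tau)\Bigr];
\end{align*}
the same bound applies to $|B_n-B|$ (take $f\equiv 1$ and use $1\le r$).

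Because $T_n\to\infty$ and $\tau<\infty$ $P_\phi$-almost surely, $I(T_n\le\tau)\to 0$ almost surely, while the dominating variable $\sum_{j=0}^{\tau-1}r(X_j)$ is $P_\phi$-integrable by \eqref{eq::finite-mean-r-cycle-length}. Dominated convergence therefore yields $\sup_{|f|\le r}|A_n-A|\to 0$ and $|B_n-B|\to 0$. Noting $B_n\ge 1$ and $|A|\le B\,\pi r<\infty$, the algebraic identity $\pi_nf-\pi f=(A_n-A)/B_n-(\pi f)(B_n-B)/B_n$ gives $\norm{\pi_n-\pi}_r\to 0$. The main obstacle is the careful lift of \eqref{eq::equality-in-distribution-until-exit} (stated for deterministic times) to the random time $\tau$: the event $\{\Gamma=\tau=k\}$ is not directly covered, since $\beta_k=2\notin\{0,1\}$, but it is determined by the $\beta$'s up to $k-1$ together with a virtual exit at step $k$ whose conditional probability $P(X_{k-1},A_n^c)$ coincides on both sides.
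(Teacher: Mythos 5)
Your proof is correct and follows essentially the same route as the paper: both represent $\pi_n$ via the regenerative ratio formula, use the coupling identity \eqref{eq::equality-in-distribution-until-exit} to rewrite the $\tilde P_{\phi,n}$-cycle quantities as $E_\phi\sum_{j=0}^{(\tau\wedge T_n)-1}(\cdot)$ (your split over $\{\Gamma>\tau\}$ and $\{\Gamma=\tau\}$ reassembles exactly the paper's \eqref{eq::regenerative-representation-for-pi-n-in-terms-of-Ephi}), and finish by dominated convergence using \eqref{eq::finite-mean-r-cycle-length}. The only point you gloss over is the assertion that $X$ is positive Harris recurrent under $P_n$ from \emph{every} initial state (needed for uniqueness of $\pi_n$); the paper handles this by showing $\tilde E_{x,n}T(C)\le E_x T(C)\le g(x)$ for all $x\in A_n$, so that \autoref{assumption::Lyapunov-with-strong-minorization} holds for $P_n$ with $r\equiv 1$.
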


\begin{proof} We first note that $C\subseteq A_1$, and that $X_\Gamma$ has distribution $\phi$ under $\tilde P_{x,n}$. Furthermore, $\phi(C)=1$, so $X_\Gamma\in C$. Consequently, $T(C)=T(C)\land \Gamma$ under $\tilde P_{x,n}$, so that for $x\in A_n$, \eqref{eq::equality-in-distribution-until-exit} implies that
\begin{align*}
\tilde E_{x,n}T(C)&= \sum_{k=0}^{\infty}\tilde P_{x,n}(X_0\not\in C, ..., X_k\not\in C)\\
&= \sum_{k=0}^{\infty} \tilde P_{x,n} (X_0\not\in C,...,X_k\not\in C, \Gamma>k)\\
&= \sum_{k=0}^{\infty} P_x(X_0\not\in C, ...,X_k\not\in C, T_n>k)\\
&=\sum_{k=0}^{\infty}P_x(T_n\land T(C)>k)\\
&= E_x T_n\land T(C)\\
&\leq E_xT(C) \leq g(x).
\end{align*}
Consequently, \autoref{assumption::Lyapunov-with-strong-minorization} holds with $P_n$ replacing $P$ and $r(x)\equiv 1$ for $x\in S$. Hence, $X$ is positive Harris recurrent with a unique stationary distribution $\pi_n$ under the transition kernel $P_n$. 

As with the transition kernel $P$, the Markov chain $X$ regenerates under $P_n$ whenever $X$ is distributed according to $\phi$ (i.e. at times $\tau_i$ at which $\beta_{\tau_i}\in\{1,2\}$). The ratio representation formula for regenerative processes therefore applies, so that as with \eqref{eq::phi-regenerative-representation},
\begin{align*}
\pi_n(\cdot) = \frac{\tilde E_{\phi, n}\sum_{j=0}^{\tau-1} I(X_j\in \cdot)}{\tilde E_{\phi, n}\tau},
\end{align*}
where $\tilde P_{\mu, n}(\cdot)=\int_{S}^{}\mu(dx)P_{x,n}(\cdot)$ and $\tilde E_{\mu, n}(\cdot)$ is its associated expectation (for a generic distribution $\mu$). The equality \eqref{eq::equality-in-distribution-until-exit} implies that
\begin{align*}
P_{x}(X_k\in \cdot, \tau\land T_n>k) &= \tilde P_{x,n}(X_k\in \cdot, \tau\land \Gamma>k)\\
&=\tilde P_{x,n}(X_k\in \cdot, \tau>k)
\end{align*}
and hence 
\begin{align*}
E_x \sum_{k=0}^{(\tau\land T_n)-1}I(X_k\in \cdot) = \tilde E_{x,n} \sum_{k=0}^{\tau-1}I(X_k\in \cdot),
\end{align*}
so that we can represent $\pi_n$ as 
\begin{align}
\pi_n(\cdot) &=\frac{E_\phi \sum_{j=0}^{(\tau\land T_n)-1}I(X_j\in \cdot)}{E_{\phi} \tau\land T_n}.\label{eq::regenerative-representation-for-pi-n-in-terms-of-Ephi}
\end{align}
It follows from \eqref{eq::phi-regenerative-representation} and \eqref{eq::regenerative-representation-for-pi-n-in-terms-of-Ephi} that for $f$ satisfying $|f(x)|\leq r(x)$ for $x\in S$, we can write
\begin{align*}
\int_{S}^{}(\pi_n-\pi)(dx)f(x)= \frac{E_\phi \sum_{j=0}^{(\tau\land T_n)-1} f(X_j)}{E_{\phi}\tau\land T_n} - \frac{E_\phi \sum_{j=0}^{\tau-1}f(X_j)}{E_\phi \tau}.
\end{align*}
Since $r(x)\geq 1$ for $x\in S$, it follows from \eqref{eq::finite-mean-r-cycle-length} that
\begin{align*}
\biggr|\int_{S}^{}&(\pi_n-\pi)(dx)f(x)\biggr|  \\
&=\left|\frac{E_\phi (\tau-(\tau\land T_n))E_\phi \sum_{j=0}^{(\tau\land T_n)-1}f(X_j)- E_\phi \sum_{j=\tau\land T_n}^{\tau-1}f(X_j)\cdot E_{\phi} \tau\land T_n}{\left(E_\phi \tau\land T_n\right)\left(E_\phi \tau\right)}\right|\\
&\leq \frac{2 E_{\phi} \sum_{j=\tau\land T_n}^{\tau-1}r(X_j) E_\phi \sum_{j=0}^{\tau-1} r(X_j)}{(E_{\phi}\tau\land T_n)^2}\rightarrow 0
\end{align*}
as $n\rightarrow\infty$, uniformly over $f$ satisfying $|f(x)|\leq r(x),x\in S,$ proving the theorem.
\end{proof}

We now to turn to generalizing the class of augmentations that are permitted. In particular, we now modify our transition kernel $P_n$ so that
\begin{align}
P_n(x,dy) = P(x,dy) + P(x,A_n^c)\nu(dy)\label{eq::general-fixed-augmentation-distribution}
\end{align}
for $x,y\in A_n$, where $\nu(\cdot)$ is chosen so that $\nu(A_1)=1$. In other words, we now permit more general re-entry distributions. Such an augmentation is especially convenient, because unlike \eqref{eq::mixture-version-of-regenerative-augmentation}, it does not require finding a measure $\phi$ for which \autoref{assumption::Lyapunov-with-strong-minorization} holds.

\begin{Qtheorem}\label{assumption::r-regularity-of-nu} The distribution $\nu$ is chosen so that 
\begin{align}
E_\nu \sum_{j=0}^{T(C)-1}r(X_j)<\infty.\label{eq::r-regularity-of-nu}
\end{align}
We say that a distribution $\nu$ satisfying \eqref{eq::r-regularity-of-nu} is \emph{r-regular}; see p.271 and Chapter 14 \citet{meynMarkovChainsStochastic2009}.
\end{Qtheorem}

\begin{remark} Given \eqref{eq::sum-of-rewards-until-TC-bound}, it is evident that $\delta_x$ is $r$-regular for every $x\in A_1$. (In other words, each $x$ is an \emph{r-regular point} for $X$.)
\end{remark}

\begin{remark} In view of \eqref{eq::sum-of-rewards-until-TC-bound}, a sufficient condition for $\nu$ to be $r$-regular is that 
\begin{align*}
\int_{A_1}^{}g(x)\nu(dx)<\infty,
\end{align*}
where $g$ is the stochastic Lyapunov function of \autoref{assumption::Lyapunov-with-strong-minorization}.
\end{remark}

We can now state our convergence result for this more general setting. 

\begin{theorem}\label{thm::generalized-fixed-distribution-convergence} Assume \autoref{assumption::Lyapunov-with-strong-minorization} and \autoref{assumption::r-regularity-of-nu} and suppose that $P_n$ is the transition kernel given by \eqref{eq::general-fixed-augmentation-distribution}. Then, for $n$ sufficiently large, $X$ has a unique stationary distribution $\pi_n$ under $P_n$ satisfying
\begin{align*}
\norm{\pi_n-\pi}_{r}\rightarrow 0
\end{align*}
as $n\rightarrow\infty$. 
\end{theorem}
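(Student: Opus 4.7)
The plan is to extend the regenerative coupling argument underlying Theorem~\ref{thm-1} to the $\nu$-redistribution setting, while noting a critical difference: regeneration under $P_n$ still occurs only at times $k$ with $\beta_k = 1$ (not $\beta_k = 2$), so a single $\phi$-to-$\phi$ cycle under $P_n$ may now contain several failed sub-excursions, each beginning afresh at $\nu$ after an exit attempt. The role of \autoref{assumption::r-regularity-of-nu} will be to control the total $r$-weighted cost of these sub-excursions.

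The key step is to combine \eqref{eq::equality-in-distribution-until-exit} with the strong Markov property at the first exit attempt $\Gamma$ to decompose, for any starting distribution $\mu$ supported in $A_n$,
\begin{align*}
\tilde E_{\mu, n} \sum_{j=0}^{\tau-1} f(X_j) = E_\mu \sum_{j=0}^{(\tau \land T_n)-1} f(X_j) + P_\mu(T_n < \tau) \cdot \tilde E_{\nu, n} \sum_{j=0}^{\tau-1} f(X_j),
\end{align*}
since on $\{\Gamma < \tau\}$ the chain restarts at $\nu$ and the remainder of the cycle is a fresh $P_n$-cycle from $\nu$. Applying this with $\mu = \nu$ produces a self-referential identity that can be solved to give
\begin{align*}
\tilde E_{\nu, n} \sum_{j=0}^{\tau-1} f(X_j) = \frac{E_\nu \sum_{j=0}^{(\tau \land T_n)-1} f(X_j)}{1 - P_\nu(T_n < \tau)},
\end{align*}
valid once $P_\nu(T_n < \tau) < 1$, which holds for $n$ large since $T_n \rightarrow \infty$ almost surely. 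For $|f| \leq r$, the numerator is dominated by $E_\nu \sum_{j=0}^{\tau-1} r(X_j) \leq E_\nu \sum_{j=0}^{T(C)-1} r(X_j) + \gamma$, which is finite by $r$-regularity of $\nu$ and \eqref{eq::finite-mean-r-cycle-length}. Thus the right-hand side is bounded uniformly in $n$.

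Plugging this back into the $\mu = \phi$ decomposition, as $n \rightarrow \infty$: $P_\phi(T_n < \tau) \rightarrow 0$, so the ``bad'' term vanishes uniformly over $|f| \leq r$; meanwhile $E_\phi \sum_{j=0}^{(\tau \land T_n)-1} f(X_j) \rightarrow E_\phi \sum_{j=0}^{\tau-1} f(X_j)$ by dominated convergence with integrable dominator $\sum_{j=0}^{\tau-1} r(X_j)$. Taking $f \equiv 1$ also gives $\tilde E_{\phi, n} \tau \rightarrow E_\phi \tau$, in particular $\tilde E_{\phi, n} \tau < \infty$ for large $n$, so the regenerative ratio representation $\pi_n(\cdot) = \tilde E_{\phi, n} \sum_{j=0}^{\tau-1} I(X_j \in \cdot) / \tilde E_{\phi, n} \tau$ holds as in Theorem~\ref{thm-1}. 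Inserting these convergences into the ratios and comparing with \eqref{eq::phi-regenerative-representation} yields $\norm{\pi_n - \pi}_r \rightarrow 0$.

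The main obstacle I foresee is correctly accounting for the possibly many sub-excursions emanating from $\nu$ within a single $P_n$-regenerative cycle; in the earlier theorem, exit and regeneration coincided automatically, whereas here they decouple. The self-referential identity above resolves this, and its finiteness and uniform boundedness in $n$ are precisely what $r$-regularity of $\nu$ buys us. Without that hypothesis, the $\nu$-cycle $r$-moment could diverge and the whole decomposition would collapse.
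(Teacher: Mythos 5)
Your proposal follows essentially the same route as the paper's proof: the decomposition of a $P_n$-cycle at the first exit attempt $\Gamma$ via \eqref{eq::equality-in-distribution-until-exit}, the self-referential identity for the $\nu$-started cycle solved to give $\tilde E_{\nu,n}\sum_{j=0}^{\tau-1}f(X_j)=E_\nu\sum_{j=0}^{(\tau\land T_n)-1}f(X_j)/P_\nu(\tau<T_n)$, the substitution back into the $\phi$-started decomposition, and the use of \autoref{assumption::r-regularity-of-nu} to make the $\nu$-cycle $r$-moment finite are all exactly the paper's steps (the paper invokes monotone rather than dominated convergence for the main term, an immaterial difference). The one piece you omit is the uniqueness assertion in the theorem: the ratio formula only exhibits \emph{a} stationary distribution, and the paper closes with a short argument that for $n$ large the chain under $P_n$ is $\phi$-irreducible (any state either reaches a $\phi$-positive set within $A_n$, or exits, restarts from $\nu$, and reaches it because $P_\nu(\tau<T_n)>0$), which is what delivers uniqueness; you should add that observation.
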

\begin{proof} In this setting, $\tau= \inf\{k\geq 1: \beta_k=1\}$ (rather than as in \autoref{thm-1}, where $\tau=\inf\{k\geq 1: \beta_k\in\{1,2\}\}$). In view of \eqref{eq::equality-in-distribution-until-exit}, 
\begin{align*}
\tilde E_{\nu, n} \sum_{j=0}^{\tau-1}I(X_j\in \cdot) &= \tilde E_{\nu, n} \sum_{j=0}^{(\tau\land \Gamma)-1}I(X_j\in \cdot) + \tilde E_{\nu, n} \sum_{j=\Gamma}^{\tau-1}I(X_j\in \cdot)\\
&=E_{\nu} \sum_{j=0}^{(\tau\land T_n)-1}I(X_j\in\cdot) + \tilde P_{\nu, n}(\Gamma<\tau)\tilde E_{\nu,n}\sum_{j=0}^{\tau-1}I(X_j\in\cdot)\\
&=E_\nu\sum_{j=0}^{(\tau\land T_n)-1}I(X_j\in\cdot) + P_\nu(T_n<\tau)\tilde E_{\nu, n}\sum_{j=0}^{\tau-1}I(X_j\in\cdot).
\end{align*}
Of course, $P_\nu(T_n<\tau)\searrow 0$ as $n\rightarrow\infty$, so $P_\nu(T_n<\tau)<1$ for $n$ sufficiently large. Hence, for $n$ sufficiently large,
\begin{align*}
\tilde E_{\nu,n}\sum_{j=0}^{\tau-1}I(X_j\in\cdot) = \frac{E_\nu \sum_{j=0}^{(T\land T_n)-1}I(X_j\in \cdot)}{P_{\nu}(\tau<T_n)}.
\end{align*}
Consequently, 
\begin{align}
\tilde E_{\phi, n}\sum_{j=0}^{\tau-1}I(X_j\in\cdot) &= \tilde E_{\phi,n} \sum_{j=0}^{(\tau\land \Gamma)-1}I(X_j\in\cdot) + \tilde E_{\phi,n} \sum_{j=\Gamma}^{\tau-1} I(X_j\in\cdot)\nonumber\\
&=E_\phi \sum_{j=0}^{(\tau\land T_n)-1}I(X_j\in\cdot) + P_\phi(T_n<\tau)\tilde E_{\nu,n}\sum_{j=0}^{\tau-1}I(X_j\in\cdot) \nonumber\\
&=\tilde E_{\phi}\sum_{j=0}^{(\tau\land T_n)-1}I(X_j\in\cdot) + P_\phi(T_n<\tau) \frac{E_\nu \sum_{j=0}^{(\tau\land T_n)-1}I(X_j\in\cdot)}{P_\nu(\tau<T_n)}.\label{eq::Ephi-in-terms-of-Enu}
\end{align}
Observe that
\begin{align*}
E_\nu\sum_{j=0}^{\tau-1}I(X_j\in\cdot) \leq E_\nu T(C)+ \sup_{x\in C}E_x\tau<\infty
\end{align*}
due to \autoref{assumption::r-regularity-of-nu} and \eqref{eq::finite-mean-r-cycle-length}. Applying the Monotone Convergence Theorem to \eqref{eq::Ephi-in-terms-of-Enu} then allows us to conclude that
\begin{align}
\tilde E_{\phi,n}\sum_{j=0}^{\tau-1}I(X_j\in B) \rightarrow E_\phi \sum_{j=0}^{\tau-1}I(X_j\in B)<\infty\label{eq::convergence-of-sums-of-indicators}
\end{align}
as $n\rightarrow\infty$ for each (measurable) $B$. As with \eqref{eq::phi-regenerative-representation}, this implies that
\begin{align*}
\pi_n(\cdot) =\frac{\tilde E_{\phi,n}\sum_{j=0}^{\tau-1}I(X_j\in\cdot)}{E_{\phi,n}\tau}
\end{align*}
is a stationary distribution for $X$ under $P_n$. Furthermore, \eqref{eq::convergence-of-sums-of-indicators} establishes that $\pi_n(B)\rightarrow \pi(B)$ as $n\rightarrow\infty$. In fact, the $r$-regularity implies that for $n$ large enough
\begin{align*}
\tilde E_{\phi,n}\sum_{j=0}^{\tau-1}r(X_j)\rightarrow E_\phi \sum_{j=0}^{\tau-1}r(X_j)<\infty
\end{align*}
as $n\rightarrow\infty$. The same argument as in the proof of \autoref{thm-1} then proves that
\begin{align*}
\norm{\pi_n-\pi}_{r}\rightarrow 0
\end{align*}
as $n\rightarrow\infty$.

Finally, we note that for $x\in A_n$ and $B$ such that $\phi(B)>0$, the Harris recurrence of $X$ under $P$ implies that either there is a set of paths from $x$ to $B$ having positive probability lying entirely within $A_n$, or $X$ attempts to exit to $A_n^c$ and is then re-distributed according to $\nu$ under $P_n$. But our above argument shows that $P_\nu(T_n<\tau)<1$ for $n$ sufficiently large, so that there is then also a set of paths of positive probability to $B$ from $x$ when $X$ attempts to exit to $A_n^c$. Hence, for $n$ sufficiently large, $X$ is a $\phi$-irreducible Markov chain, and consequently has a unique stationary distribution.
\end{proof}

\section{Relaxing the Strong Minorization Condition}\label{sec::relaxing-strong-minorization}
In this section, we require that $S$ be a separable metric space, and that the transition probabilities $(P(x,B):x\in S, B\in \mathscr{S})$ be defined relative to the Borel $\sigma$-algebra $\mathscr{S}$ associated with $S$. We make this assumption in order to ensure the existence of the conditional probability distributions that we will need later in this section; see p.79 in \citet{breimanProbabilityStochasticProcesses1969} and p.219 of \citet{billingsleyConvergenceProbabilityMeasures1968}. Given the state space $S$, we again let $(A_n:n\geq 1)$ be an increasing sequence of truncation sets, defined as in \autoref{sec::strongly-minorized-Harris-recurrent-fixed-state-convergence}.

We generalize the truncation-augmentation convergence theorems of \autoref{sec::strongly-minorized-Harris-recurrent-fixed-state-convergence} by weakening \autoref{assumption::Lyapunov-with-strong-minorization} as follows.

\begin{Qtheorem}\label{assumption::Lyapunov-with-m-step-minorization} There exists a non-empty subset $C\subseteq A_1,g:S\rightarrow \R_+,r :S\rightarrow[1,\infty), \lambda>0, b\in\R_+,m\geq 1,$ and a probability $\phi$ on $C$ such that:
\begin{enumerate}[label=\roman*)]
\item $\sup_{x\in C}g(x)<\infty$\label{assumption::Lyapunov-with-m-step-minorization::finite-g-in-C};
\item $(Pg)(x)\leq g(x)-r(x)+bI_C(x), x\in S$\label{assumption::Lyapunov-with-m-step-minorization::Lyapunov-function};
\item $P_x(X_m\in dy, T_1>m)\geq \lambda\phi(dy), x,y\in C$\label{assumption::Lyapunov-with-m-step-minorization::m-step-minorization}.
\end{enumerate}
\end{Qtheorem}

As compared to \autoref{assumption::Lyapunov-with-strong-minorization}, it is condition \ref{assumption::Lyapunov-with-m-step-minorization::m-step-minorization} that is weakened here to a minorization condition involving the $m$-step transition probabilities associated with having stayed within $A$ for all $m$ steps. 

\begin{remark}
We note that if the sub-stochastic kernel $(P(x,dy):x,y\in A_1)$ is $\phi$-irreducible, then existence of such a $C$-set is guaranteed; see p.7 of \citet{oreyLectureNotesLimit1971}. 
\end{remark}

For a given truncation set $A_n$, we define $P_n$ as in \eqref{eq::general-fixed-augmentation-distribution}, so that the distribution $\nu$ is used as the re-entry distribution whenever $X$ attempts to exit $A_n$. In the current setting, we require a more complex randomization than in \autoref{sec::strongly-minorized-Harris-recurrent-fixed-state-convergence}, because condition \autoref{assumption::Lyapunov-with-m-step-minorization} \ref{assumption::Lyapunov-with-m-step-minorization::m-step-minorization} involves the $m$-step transition kernel, taken over trajectories in which $T_1>m$. In particular, we note that \autoref{assumption::Lyapunov-with-m-step-minorization} \ref{assumption::Lyapunov-with-m-step-minorization::m-step-minorization} allows us to write, for $x\in C$, $y\in A_1$, 
\begin{align}
P_x(X_m\in dy, T_1>m) = \lambda \phi(dy) + (P_x(T_1>m)-\lambda)\tilde H(x,dy),\label{eq::law-of-Xm-intersect-with-T1-larger-than-m}
\end{align}
where $\tilde H(x,dy)=(P_x(X_m\in dy, T_1>m)-\lambda\phi(dy))/(P_x(T_1>m)-\lambda)$. Consequently, the right-hand side of \eqref{eq::law-of-Xm-intersect-with-T1-larger-than-m} is now expressed as a mixture of the probabilities $\phi(\cdot)$ and $\tilde H(x,\cdot)$, with corresponding mixture probabilities $\lambda$ and $P_x(T_1>m)-\lambda$. The first time $X$ visits $C$ at time $i$ (say), we generate $\beta_{i+m}$. When $\beta_{i+m}=1$ (with probability $\lambda$), we generate $X_{i+m}$ from distribution $\phi$. With probability $P_x(T_1>m)-\lambda,\beta_{i+m}=0$ and $X_{i+m}$ is generated from the distribution $\tilde H(x,\cdot)$. With $X_i$ and $X_{i+m}$ having been generated, we then ``fill in'' $(X_{i+1},...,X_{i+m-1})$ according to the appropriate conditional distribution, and set $\beta_j=0$ for $i+1\leq j\leq i+m$. The above description describes the randomization on those trajectories for which $X$ does not exit $A_1$ between $i$ and $i+m$. 

The full randomization procedure is described next. Given the existence of regular conditional distributions, we may define
\begin{align*}
M(dx_1, dx_2,...,dx_{m-1}|x,y) = P_x(X_i\in dx_i, 1\leq i\leq m-1|X_m=y, T_1>m)
\end{align*}
$P_x$ a.s., so that we may write (in view of \eqref{eq::equality-in-distribution-until-exit}) 
\begin{align}
\tilde P_{x,n} &(X_i\in dx_i, 1\leq i\leq m) = \nonumber\\
&\qquad \ \ \ \ \lambda \phi (dx_m) M(dx_1,...,dx_{m-1}|x,x_m)\nonumber\\
&\qquad +(P_x(T_1>m)-\lambda)\tilde H(x,dx_m) M(dx_1,...,dx_{m-1}|x,x_m)\nonumber\\
&\qquad +P_x(T_1\leq m<T_n)P_x(X_i\in dx_i, 1\leq i\leq m|T_1\leq m<T_n)\nonumber\\
&\qquad +P_x(T_n\leq m) \tilde P_{x,n}(X_i\in dx_i, 1\leq i\leq m|\Gamma\leq m)\label{eq::law-of-Xi-from-1-to-m}
\end{align}
for $x\in C$, where $\Gamma=\inf\{k\geq 1: \beta_k=2\}$ is the first time $X$ attempts to exit $A_n$ and is forced to re-enter $A_1$, using distribution $\nu$ (precisely as in \autoref{sec::strongly-minorized-Harris-recurrent-fixed-state-convergence}).

In other words, \eqref{eq::law-of-Xi-from-1-to-m} asserts that the general $m$-step transition structure under $P_n$, starting from $x\in C$, can be written as a mixture of four distributions, with corresponding mixture probabilities $\lambda, P_x(T_1>m)-\lambda, P_x(T_1\leq m < T_n)$, and $P_x(T_n\leq m)$. The $\beta$ random variable (rv) is set equal to 1 at time $m$ when mixture component 1 is used, and equal to 2 at attempted exit times from $A_n$ which can occur only within the path $(X_0,X_1,...,X_m)$ associated with mixture component 4. At all other times and within the other mixture components, $\beta$ equals 0.

Having generated $(X_1,...,X_m)$ using \eqref{eq::law-of-Xi-from-1-to-m}, we run the chain under the transition kernel $P_n$ until the next time $X$ re-visits $C$ at the time $r\geq m$ (say), thereby generating $(X_i,\beta_i)$ with $m<i\leq r$. (Again, $\beta_i=0$ except at those times $j$ at which $X$ attempts to exit $A_n$, in which case $\beta_j=2$.) We then again use the mixture representation \eqref{eq::law-of-Xi-from-1-to-m} to generate $((X_i,\beta_i): r<i\leq r+m)$, and continue repeating this process indefinitely to obtain $((X_k,\beta_k):k\geq 0)$. As in \autoref{sec::strongly-minorized-Harris-recurrent-fixed-state-convergence}, the times $\tau_1,\tau_2,...$ at which $\beta_{\tau_k}=1$ are regeneration times for $X$ under the transition kernel $P$ and under the transition kernels $P_n$ for $n\geq 1$. 

Note that for $x\in C$, 
\begin{align*}
E_x \sum_{j=0}^{\tau-1}r(X_j) &\leq E_x \sum_{j=0}^{S_{m+1}(C)-1}r(X_j) \\
&\qquad\qquad +E_x I(\tau>S_1(C)+m) \sum_{j=S_{m+1}(C)}^{\tau-1}r(X_j)\\
&\leq (m+1)s \ +\\
&\qquad E_xI(\tau>S_1(C)+m)E_x\bigr[\sum_{j=S_{m+1}(C)}^{\tau-1}r(X_j)|X_i, \beta_i:i\leq S_{m+1}(C)\bigr]\\
&\leq (m+1)s + \gamma P_x(\tau>S_1(C)+m)\\
&= (m+1)s + \gamma \lambda.
\end{align*}
Taking the supremum over $x\in C$ on the left-hand side, we conclude that
\begin{align*}
\gamma\leq (m+1)s + \gamma \lambda
\end{align*}
and hence 
\begin{align*}
\gamma\leq \frac{(m+1)s}{1-\lambda}<\infty,
\end{align*}
(since $s<\infty$ is implied by \autoref{assumption::Lyapunov-with-m-step-minorization} \ref{assumption::Lyapunov-with-m-step-minorization::finite-g-in-C} and \ref{assumption::Lyapunov-with-m-step-minorization::Lyapunov-function}; see \citet{meynMarkovChainsStochastic2009}).

Since $r(x)\geq 1$ for $x\in S$, it follows that $X$ is a positive recurrent regenerative process under the transition kernel $P$. The Markov chain $X$ is also guaranteed to be a positive recurrent Harris chain with stationary distribution $\pi$ given by the regenerative ratio formula \eqref{eq::phi-regenerative-representation}.

In the presence of \autoref{assumption::r-regularity-of-nu}, the same argument as used in \autoref{sec::strongly-minorized-Harris-recurrent-fixed-state-convergence} proves that $X$ is a positive recurrent regenerative process under the transition kernel $P_n$, for $n$ sufficiently large. Furthermore, the same argument as utilized in \autoref{sec::strongly-minorized-Harris-recurrent-fixed-state-convergence} proves that $X$ has a stationary distribution $\pi_n$ given by
\begin{align}
 \pi_n(\cdot) = \frac{E_\phi \sum_{j=0}^{(\tau\land T_n)-1}I(X_j\in \cdot) + P_\phi(T_n<\tau)E_\nu \sum_{j=0}^{(\tau\land T_n)-1}I(X_j\in \cdot)/P_\nu(\tau<T_n)}{E_\phi \tau\land T_n + P_\phi(T_n<\tau)(E_\nu \tau\land T_n)/P_\nu(\tau<T_n)}\label{eq::regenerative-representation-for-pi-n-for-m-step-minorization}
\end{align}
Finally, the argument used in \autoref{thm::generalized-fixed-distribution-convergence} proves the main result of this section.

\begin{theorem}\label{thm::m-step-minorization-convergence} Assume \autoref{assumption::r-regularity-of-nu} and \autoref{assumption::Lyapunov-with-m-step-minorization}, and suppose that $P_n$ is the transition kernel given by \eqref{eq::general-fixed-augmentation-distribution}. Then, for $n$ sufficiently large, $X$ has a unique stationary distribution $\pi_n$ under $P_n$ satisfying 
\begin{align*}
\norm{\pi_n-\pi}_{r}\rightarrow 0
\end{align*}
as $n\rightarrow\infty$.
\end{theorem}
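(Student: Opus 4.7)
My plan is to reduce everything to the template already established in the proof of \autoref{thm::generalized-fixed-distribution-convergence}, using the $m$-step coupling/randomization constructed just before the theorem statement as the substitute for the one-step splitting of \autoref{sec::strongly-minorized-Harris-recurrent-fixed-state-convergence}. The first step is to verify that the formula \eqref{eq::regenerative-representation-for-pi-n-for-m-step-minorization} really does define a stationary probability for $P_n$. For this I would carefully check that the analogue of \eqref{eq::equality-in-distribution-until-exit} still holds under the four-component mixture \eqref{eq::law-of-Xi-from-1-to-m}: on the event $\{T_n>k\}\cap\{\tau>k\}$, the trajectory $(X_0,X_1,\dots,X_k)$ has the same joint law under $\tilde P_{x,n}$ and under $P_x$, because attempted exits from $A_n$ can only occur inside mixture component 4, and the remaining three components reproduce $P_x$ exactly on the set $\{T_n>k\}$. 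Iterating the $m$-block construction between successive visits to $C$ preserves this identity between consecutive regenerations. Combined with the bound $\gamma\le(m{+}1)s/(1-\lambda)<\infty$ already established, this makes the ratio representation legitimate.

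Next I would derive \eqref{eq::regenerative-representation-for-pi-n-for-m-step-minorization} by the exact bookkeeping used in \autoref{thm::generalized-fixed-distribution-convergence}: decompose
\[
\tilde E_{\mu,n}\sum_{j=0}^{\tau-1} I(X_j\in\cdot) \;=\; \tilde E_{\mu,n}\sum_{j=0}^{(\tau\wedge\Gamma)-1} I(X_j\in\cdot) + \tilde P_{\mu,n}(\Gamma<\tau)\,\tilde E_{\nu,n}\sum_{j=0}^{\tau-1}I(X_j\in\cdot),
\]
first for $\mu=\nu$ to solve for $\tilde E_{\nu,n}\sum_{j=0}^{\tau-1}I(X_j\in\cdot)$, then for $\mu=\phi$ to obtain the numerator in \eqref{eq::regenerative-representation-for-pi-n-for-m-step-minorization}. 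The strong Markov property at $\Gamma$ and the distributional identity above let me replace each $\tilde P$-expression by an honest $P$-expression (with $T_n$ in place of $\Gamma$), and \autoref{assumption::r-regularity-of-nu} together with the finite-$\gamma$ bound ensures that the quantities involved are finite so that the algebra of solving the fixed-point equation is legitimate for $n$ large enough that $P_\nu(T_n<\tau)<1$.

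Given the representation, convergence $\pi_n(B)\to\pi(B)$ for every measurable $B$ follows by Monotone Convergence applied to \eqref{eq::regenerative-representation-for-pi-n-for-m-step-minorization}: $P_\phi(T_n<\tau)\searrow 0$ and $P_\nu(T_n<\tau)\searrow 0$ because $T_n\to\infty$ a.s., while the numerators $E_\phi\sum_{j=0}^{(\tau\wedge T_n)-1}I(X_j\in\cdot)$ increase to $E_\phi\sum_{j=0}^{\tau-1}I(X_j\in\cdot)$ and similarly for $\nu$, both limits being finite by $r$-regularity and $\gamma<\infty$. To upgrade pointwise-on-sets convergence to $\|\pi_n-\pi\|_r\to 0$, I would replace the indicator $I(X_j\in\cdot)$ by $r(X_j)$, apply the same monotone-convergence argument to obtain $\tilde E_{\phi,n}\sum_{j=0}^{\tau-1}r(X_j)\to E_\phi\sum_{j=0}^{\tau-1}r(X_j)<\infty$, and then bound the supremum over $|f|\le r$ using the same telescoping identity that produced the uniform estimate at the end of the proof of \autoref{thm-1}. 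Uniqueness of $\pi_n$ for $n$ large is inherited verbatim from the $\phi$-irreducibility argument at the end of \autoref{thm::generalized-fixed-distribution-convergence}, since $P_\nu(T_n<\tau)<1$ for such $n$.

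The step I expect to be the main obstacle is the careful verification that the $m$-step randomization really does preserve the distributional identity \eqref{eq::equality-in-distribution-until-exit} on the event $\{T_n>k\}\cap\{\tau>k\}$. Unlike in \autoref{sec::strongly-minorized-Harris-recurrent-fixed-state-convergence}, where the $\beta$-randomization is done one step at a time and transparently commutes with restricting to $\{T_n>k\}$, here $\beta_{i+m}$ is decided by a mixture that depends on $P_x(T_1>m)$; in particular, mixture component 3 (corresponding to $T_1\le m<T_n$) is invisible to $P$ but contributes to $P_n$, and component 4 is the only place where attempted exits from $A_n$ can occur before the next potential regeneration. I would spend the bulk of the argument confirming that, after marginalising $\beta$ and restricting to the event that no attempted exit has occurred by time $k$, the remaining conditional law of $(X_0,\dots,X_k)$ under $\tilde P_{x,n}$ coincides with $P_x$, so that the ratio formula reduces to the honest $P$-expectations that feed into the Monotone Convergence step.
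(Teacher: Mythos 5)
Your proposal follows essentially the same route as the paper: the paper's argument for this theorem consists precisely of the $m$-step randomization \eqref{eq::law-of-Xi-from-1-to-m}, the bound $\gamma\le(m+1)s/(1-\lambda)$, the ratio representation \eqref{eq::regenerative-representation-for-pi-n-for-m-step-minorization} obtained by the same first-passage decomposition at $\Gamma$ used in \autoref{thm::generalized-fixed-distribution-convergence}, and then the monotone-convergence and telescoping estimates from Theorems \ref{thm-1} and \ref{thm::generalized-fixed-distribution-convergence}. The one point where you go beyond the paper is your explicit insistence on re-verifying the analogue of \eqref{eq::equality-in-distribution-until-exit} under the four-component mixture; the paper treats this as immediate, and your identification of it as the main thing to check is sound rather than a deviation.
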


\section{Extension to Markov Jump Processes}\label{sec::fixed-state-theory-for-Markov-jump-processes}
In this section, we extend the theory of our earlier sections to the setting of continuous time Markov jump processes. In particular, we assume that $X=(X(t):t\geq 0)$ is an $S$-valued non-explosive Markov jump process with right continuous piecewise constant sample paths; see \citet{meynStabilityMarkovianProcesses1993} for a Lyapunov condition that guarantees non-explosiveness. The dynamics of $X$ are determined by its rate transition kernel $Q=(Q(x,dy):x,y\in S)$, where we assume that $Q(x,dy)\geq 0$ for $x\neq y$,
\begin{align*}
0<\beta(x)\overset{\Delta}{=}\int_{S-\{x\}}^{}Q(x,dy)<\infty,
\end{align*}
and $Q(x,S)=0$ for $x\in S$. Then, $X$ has sample trajectories in which the sequence $Y=(Y_n: n\geq 0)$ of states visited by $X$ is a Markov chain in discrete time with transition kernel $R=(R(x,dy):x,y\in S)$ given by
\begin{align*}
R(x,dy) = \begin{cases} Q(x,dy)/\beta(x), & y\neq x\\
0, & y=x.
\end{cases}
\end{align*}

Also, the amount of time that $X$ spends in the $n$'th state visited is exponentially distributed with mean $1/\beta(Y_n)$.

The theory in continuous time is very similar to that which we have already developed in discrete time. Suppose that we are given a sequence $(A_n:n\geq 1)$ of truncation sets for which $A_1\subseteq A_2\subseteq...$ with $\bigcup_{n=1}^\infty A_n=S$. Assume that:

\begin{Qtheorem}\label{assumption::Markov-jump-process-Lyapunov-assumption} There exists a subset $\emptyset\neq C\subset A_1$, $\lambda>0$, a probability $\phi$ on $C$, and functions $g:S\rightarrow\mathbb R_+$, $r:S\rightarrow[1,\infty)$ for which
\begin{enumerate}[label=\roman*)]
\item $\sup_{x\in C}[\beta(x)g(x)+\beta^{-1}(x)+g(x)+\beta(x)(Rg)(x)]<\infty$\label{assumption::Markov-jump-process-Lyapunov-assumption::finiteness-of-quantities};
\item $\int_{S}^{}Q(x,dy)g(y)\leq -r(x)$ for $x\in C^c$;
\item $R(x,dy)\geq \lambda\phi(dy)$, $x\in C$, $y\in S$.
\end{enumerate}
\end{Qtheorem}
Such a jump process is guaranteed to have a unique stationary distribution $\pi$ (see CD2 of \citet{meynStabilityMarkovianProcesses1993}). Given a truncation set $A_n$, its associated rate transition kernel $Q_n=(Q_n(x,dy):x,y\in A_n)$ is characterized by
\begin{align*}
(Q_nf)(x) = \int_{A_n}^{}Q(x,dy)f(y) + Q(x,A_n^c)\int_{A_1}^{}\nu(dy)f(y)
\end{align*}
for $x\in A_n$ and $f$ bounded and measurable. Here, $\nu$ is a probability on $A_1$ for which 
\begin{align}
\int_{A_1}^{}g(x)\nu(dx)<\infty.\label{eq::fixed-state-mjp-g-is-nu-integrable}
\end{align}
We are now ready to state our continuous time analog to \autoref{thm::generalized-fixed-distribution-convergence}.

\begin{theorem}\label{thm::Markov-jump-process-fixed-state-convergence} Assume \autoref{assumption::Markov-jump-process-Lyapunov-assumption} and \eqref{eq::fixed-state-mjp-g-is-nu-integrable}. Then, $Q_n$ has a unique stationary distribution $\pi_n$ on $A_n$ for $n$ sufficiently large, and
\begin{align*}
\norm{\pi_n-\pi}_{r} \rightarrow 0
\end{align*}
as $n\rightarrow\infty$.
\end{theorem}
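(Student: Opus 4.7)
The plan is to transfer the argument of \autoref{thm::generalized-fixed-distribution-convergence} to the continuous-time setting by working on the embedded discrete-time chain $Y=(Y_n:n\geq 0)$ and integrating over its exponential holding times. The key observation is that for $x, y \in A_n$ with $y \neq x$, the rate-kernel augmentation $Q_n$ induces the embedded-chain kernel
\[
R_n(x, dy) = R(x, dy) + R(x, A_n^c)\,\nu(dy),
\]
which is precisely the fixed-return-distribution augmentation of $R$ given by \eqref{eq::general-fixed-augmentation-distribution}; any mass $\nu(\{x\}) > 0$ merely lengthens the exponential holding time at $x$ without altering the law of the continuous-time process. Combined with condition iii) of \autoref{assumption::Markov-jump-process-Lyapunov-assumption}, which supplies the strong minorization $R(x,\cdot) \geq \lambda \phi(\cdot)$ on $C$, this places the embedded chain in the framework of \autoref{sec::strongly-minorized-Harris-recurrent-fixed-state-convergence}, and I would randomize with a $(\beta_k)$ sequence, first regeneration time $\tau$, and first attempted-exit time $\Gamma$ exactly as in that section.

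Because the holding time at $Y_j$ is exponential with mean $1/\beta(Y_j)$, the continuous-time regenerative ratio representations become
\[
\pi(B) = \frac{E_\phi \sum_{j=0}^{\tau-1} I(Y_j \in B)/\beta(Y_j)}{E_\phi \sum_{j=0}^{\tau-1} 1/\beta(Y_j)}, \qquad \pi_n(B) = \frac{\tilde E_{\phi, n} \sum_{j=0}^{\tau-1} I(Y_j \in B)/\beta(Y_j)}{\tilde E_{\phi, n} \sum_{j=0}^{\tau-1} 1/\beta(Y_j)}.
\]
The rate-form Lyapunov bound $\int Q(x,dy) g(y) \leq -r(x)$ for $x \in C^c$ translates, using $Q(x,\{x\}) = -\beta(x)$, to $(Rg)(x) \leq g(x) - r(x)/\beta(x)$ for $x \in C^c$. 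The four supremum controls in \autoref{assumption::Markov-jump-process-Lyapunov-assumption} \ref{assumption::Markov-jump-process-Lyapunov-assumption::finiteness-of-quantities} furnish the analogs of the $C$-boundary constant $b$ in the discrete-time Lyapunov inequality; repeating the cycle-length argument that leads to \eqref{eq::finite-mean-r-cycle-length}, one obtains
\[
E_\phi \sum_{j=0}^{\tau-1} \frac{r(Y_j)}{\beta(Y_j)} < \infty \quad \text{and} \quad E_\phi \sum_{j=0}^{\tau-1} \frac{1}{\beta(Y_j)} < \infty,
\]
while \eqref{eq::fixed-state-mjp-g-is-nu-integrable} plays the role of \autoref{assumption::r-regularity-of-nu} and, via \eqref{eq::sum-of-rewards-until-TC-bound}, delivers $E_\nu \sum_{j=0}^{\tau-1} r(Y_j)/\beta(Y_j) < \infty$.

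With these inputs in hand, I would mimic the proof of \autoref{thm::generalized-fixed-distribution-convergence} on the embedded chain $Y$, applying the coupling identity \eqref{eq::equality-in-distribution-until-exit} and the decomposition \eqref{eq::Ephi-in-terms-of-Enu} to the non-negative test functions $f(y) = I(y \in B)/\beta(y)$, $f(y) = 1/\beta(y)$, and $f(y) = r(y)/\beta(y)$. Monotone convergence then yields $\tilde E_{\phi,n} \sum_{j=0}^{\tau-1} f(Y_j) \to E_\phi \sum_{j=0}^{\tau-1} f(Y_j)$ for each such $f$, giving pointwise convergence $\pi_n(B) \to \pi(B)$; the $r$-weighted total variation bound follows from the closing arithmetic of the proof of \autoref{thm-1}, with $r(\cdot)/\beta(\cdot)$ playing the role of $r(\cdot)$ inside the cycles and the common denominator $E_\phi \sum_{j=0}^{\tau-1} 1/\beta(Y_j) > 0$ providing a bounded normalization. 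Uniqueness of $\pi_n$ for large $n$ follows via the $\phi$-irreducibility argument at the end of the proof of \autoref{thm::generalized-fixed-distribution-convergence}. The main obstacle I anticipate is bookkeeping: the four supremum controls are bundled in condition \ref{assumption::Markov-jump-process-Lyapunov-assumption::finiteness-of-quantities} precisely because each of them surfaces at a distinct point when the discrete-time Lyapunov machinery is transferred to the weights $r/\beta$ and $1/\beta$, so a careful walk-through is needed to verify all the required integrability properties before quoting the discrete-time arithmetic.
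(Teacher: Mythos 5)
Your proposal is correct in outline and ends in the same place as the paper — the regenerative ratio formula plus the coupling/monotone-convergence argument of \autoref{thm::generalized-fixed-distribution-convergence} — but it reaches the key integrability estimates by a genuinely different route. The paper stays in continuous time: it shows $g(X(t))-\int_0^t (Qg)(X(s))\,ds$ is a local martingale, deduces that $g(X(t))+\int_0^t r(X(s))\,ds - c\int_0^t I(X(s)\in C)\,ds$ is a local supermartingale with $c=\sup_{x\in C}[\beta(x)g(x)+\beta(x)(Rg)(x)]$, applies optional sampling, and then controls the residual term via a separate exponential-holding-time estimate $E_x\int_0^\tau I(X(s)\in C)\,ds\le d/\lambda$, arriving at $E_x\int_0^\tau r(X(s))\,ds\le g(x)+cd/\lambda$. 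You instead push everything onto the embedded chain $Y$, translating the rate-form drift condition to $(Rg)(x)\le g(x)-r(x)/\beta(x)$ on $C^c$ and re-running the discrete-time comparison machinery with the weights $r/\beta$ and $1/\beta$. Your identification of $R_n$ as the fixed-return-distribution augmentation of $R$, and your holding-time-weighted ratio representations of $\pi$ and $\pi_n$, are both correct, and your route has the advantage of avoiding continuous-time martingale theory; the paper's route keeps the Lyapunov bookkeeping in the generator, where \autoref{assumption::Markov-jump-process-Lyapunov-assumption} is stated.

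One caveat you should make explicit: the modified reward $\tilde r = r/\beta$ satisfies $\tilde r\ge 0$ but not $\tilde r\ge 1$ when the jump rates are unbounded, so the embedded chain is \emph{not} literally placed "in the framework of \autoref{sec::strongly-minorized-Harris-recurrent-fixed-state-convergence}" — \autoref{assumption::Lyapunov-with-strong-minorization} requires $r\ge 1$, and that lower bound is what delivers Harris recurrence ($T(C)<\infty$ a.s.) and finiteness of the discrete cycle length in Section 2. In your setup, $E_x\sum_{j<T(C)}1/\beta(Y_j)=E_x\xi(C)\le g(x)<\infty$ gives finiteness of the continuous-time hitting time, and you must then invoke non-explosiveness to conclude that the embedded chain reaches $C$ in finitely many steps; likewise positive recurrence must be phrased for the continuous-time cycle length $E_\phi\sum_{j<\tau}1/\beta(Y_j)$ rather than for $E_\phi\tau$ of the embedded chain. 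The cycle-bound recursion $\gamma\le 2s+(1-\lambda)\gamma$ itself only needs $\tilde r\ge 0$, so the rest of your argument survives, but these two points are exactly where a literal citation of the discrete-time theorems would fail and a re-derivation is required.
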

\begin{proof} The argument is similar to that used in the proof of \autoref{thm::generalized-fixed-distribution-convergence}. We first note that for each $x\in S$,
\begin{align*}
g(X(t))-\int_{0}^{t}(Qg)(X(s))ds
\end{align*}
is a $P_x$-local martingale, with localizing sequence $(T_n:n\geq 0)$ where $T_n=\inf\{t\geq 0:\beta(X(t))+(Rg)(X(t))+g(X(t))\geq n\}$. (Note that the conditional expectation of $g(X(T_n))$ is given by $(Rg)(X(T_n-))$, which is less than $n$.) Hence, \autoref{assumption::Markov-jump-process-Lyapunov-assumption} guarantees that
\begin{align}
g(X(t)) + \int_{0}^{t}r(X(s))ds - c\int_{0}^{t}I(X(s)\in C)ds\label{eq::Markov-jump-process-Px-local-martingale}
\end{align}
is a $P_x$-local supermartingale with respect to the same localizing sequence, where
\begin{align*}
c = \sup_{x\in C}[\beta(x)g(x)+\beta(x)(Rg)(x)].
\end{align*}
Let $\xi(C)=\inf\{t\geq 0: X(t)\in C\}$ be the first hitting time of $C$. By applying optional sampling to the supermartingale \eqref{eq::Markov-jump-process-Px-local-martingale} and utilizing the facts that $g(X(T_n\land \xi(C)\land t)$ is non-negative and that $X(s)\not\in C$ for $s<\xi(C)$, we conclude that
\begin{align*}
E_x \int_{0}^{t\land T_n\land \xi(C)}r(X(s))ds &\leq g(x)+cE_x\int_{0}^{t\land T_n\land \xi(C)}I(X(s)\in C)ds\\
&= g(x)
\end{align*}
for $x\in S$. Sending $t\rightarrow\infty$ and $n\rightarrow\infty$ and applying the Monotone Convergence Theorem, we find that
\begin{align*}
E_x\int_{0}^{\xi(C)}r(X(s))ds \leq g(x).
\end{align*}
Since $r(x)\geq 1$ for $x\in S$, it follows that $\xi(C)<\infty$ a.s. Consequently, $X$ visits $C$ infinitely often $P_x$ a.s. Note that the transition kernel of $Y$ under $Q_n$ can be written in the form
\begin{align*}
R_n(x,dy) = \lambda \phi(dy) + q_n(x)Q_n(x,dy) + r_n(x)\nu(dy)
\end{align*}
for $x\in C$, where $\lambda$, $q_n(x)$, and $r_n(x)$ are non-negative and sum to 1. As in our discrete-time arguments, $X$ regenerates every time $X$ visits $C$ and makes a state transition (with probability $\lambda$) according to $\phi$. Since $X$ visits $C$ infinitely often a.s. and there is a probability $\lambda$ of a regeneration on each such visit, the conditional Borel-Cantelli lemma (p.324 of \citet{doobStochasticProcesses1953}) establishes that regeneration will occur $P_x$ a.s. for each $x\in S$.

Applying optional sampling to the local supermartingale \eqref{eq::Markov-jump-process-Px-local-martingale}, we find that
\begin{align*}
E_x\int_{0}^{t\land T_n\land \tau}r(X(s))ds \leq g(x)+cE_x\int_{0}^{t\land T_n\land\tau}I(X(s)\in C)ds
\end{align*}
for $x\in S$, $t\geq 0$, and $n\geq 1$. Sending $t\rightarrow\infty$ and $n\rightarrow\infty$ and applying the Monotone Convergence Theorem, we conclude that
\begin{align}
E_x\int_{0}^{\tau}r(X(s))ds \leq g(x)+cE_x\int_{0}^{\tau}I(X(s)\in C)ds\label{eq::Markov-jump-process-bound-on-sum-of-rewards-until-regeneration}
\end{align}
for $x\in S$.

We note that on every visit to $C$, $X$ spends an exponentially distributed amount of time in $C$ with a mean no longer than $d\overset{\Delta}{=}\sup\{\beta(x)^{-1}:x\in C\}<\infty$ (by \autoref{assumption::Markov-jump-process-Lyapunov-assumption} \ref{assumption::Markov-jump-process-Lyapunov-assumption::finiteness-of-quantities}). Hence, the rate at which $X$ transitions out of any state in $C$ is at least $d^{-1}>0$. Consequently, the rate at which $X$ transitions to a regeneration is at least $\lambda d^{-1}$ while $X$ is in $C$. So,
\begin{align*}
P_x(X(s)\in C, \tau>s) \leq E_xI(X(s)\in C)\exp(-\lambda d^{-1} \int_{0}^{s}I(X(u)\in C)du)
\end{align*}
and
\begin{align}
E_x \int_{0}^{\tau}I(X(s)\in C)ds &= \int_{0}^{\infty}P_x(X(s)\in C, \tau>s)ds\nonumber\\
&\leq E_x\int_{0}^{\infty}I(X(s)\in C)\exp(-\lambda d^{-1}\int_{0}^{s}I(X(u)\in C)du)ds\nonumber\\
&\leq\frac{d}{\lambda}.\label{eq::bound-on-indicator-on-C-until-regeneration}
\end{align}
It follows from \eqref{eq::Markov-jump-process-bound-on-sum-of-rewards-until-regeneration} and \eqref{eq::bound-on-indicator-on-C-until-regeneration} that
\begin{align}
E_x\int_{0}^{\tau}r(X(s))ds \leq g(x)+\frac{cd}{\lambda}\label{eq::Markov-jump-process-bound-on-sum-of-rewards-until-regeneration-bound}
\end{align}
for $x\in S$. So \autoref{assumption::Markov-jump-process-Lyapunov-assumption} \ref{assumption::Markov-jump-process-Lyapunov-assumption::finiteness-of-quantities} implies that
\begin{align*}
E_\phi \int_{0}^{\tau}r(X(s))ds \leq \sup_{x\in C} g(x) + \frac{cd}{\lambda}<\infty,
\end{align*}
and hence the regenerative cycles of $X$ under $Q$ have finite expectation. So, the unique stationary distribution $\pi$ of $X$ can be expressed as
\begin{align*}
\pi(\cdot) = \frac{E_\phi \int_{0}^{\tau}I(X(s)\in \cdot)ds}{E_\phi \tau}.
\end{align*}
Furthermore, \eqref{eq::Markov-jump-process-bound-on-sum-of-rewards-until-regeneration-bound} establishes that
\begin{align*}
E_\nu\int_{0}^{\tau}r(X(s))ds \leq \int_{A_1}^{}\nu(dx)g(x)+\frac{cd}{\lambda}<\infty.
\end{align*}
The rest of the proof follows the same argument as that used in \autoref{thm::generalized-fixed-distribution-convergence}.
\end{proof}

\bibliographystyle{apalike}
\bibliography{FixedStateConvergence}

\clearpage

\end{document}